\documentclass[12pt]{amsart}
	\usepackage{amsmath,amsfonts,amsthm} 
  \usepackage{amssymb,latexsym}  
  \usepackage[latin1]{inputenc}   
	\usepackage{hyperref}      
  \usepackage{fancyhdr}    
	\usepackage{url}

	\newtheorem{theorem}{Theorem}
	\newtheorem{lemma}{Lemma}
	\newtheorem{corollary}{Corollary}
\title{  $p$-adic Zeros of  Quintic Forms} 
\author{ Jan H. Dumke }
\begin{document}
\begin{abstract} 
It is shown that a quintic form  over a $p$-adic field with at least $26$ variables has a non-trivial zero,  providing that the cardinality of the residue class field exceeds $9$. 
\end{abstract}
\maketitle	
\let\thefootnote\relax\footnotetext{2010 Mathematics Subject Classification. 11D88 (11D72, 11E76)}
\let\thefootnote\relax\footnotetext{Key words and phrases. Artin's conjecture, p-adic forms, forms in many variables} 

\section{Introduction}
Let $F(x_1,\dots,x_n)$ denote a form of degree $d$ over a $p$-adic field $\mathbb{K}$. It is a conjecture of E. Artin from the 1930s, that $F$ has a non-trivial zero as soon as $n>d^2$. Although this is known to be false
for many $d$ (for instance, see \cite{MR0197450} for a $2$-adic quartic form) the conjecture has been partially verified by Ax and Kochen \cite{MR0184930}. They showed that for every $d$ there exists a positive integer $q_0(d)$, such that Artin's conjecture holds whenever the cardinality $q$ of the residue class field  
exceeds $q_0(d)$. However, little is  known about the actual values of $q_0(d)$. Brown \cite{MR494980} has given a huge, but explicit bound on $q_0(d)$. If we write $a\uparrow b$ for $a^b$ it can be stated as
\begin{align}
q_0(d)\leq 2\uparrow (2\uparrow (2\uparrow (2\uparrow (2\uparrow (d\uparrow (11\uparrow (4d))))))).		\notag
\end{align}
If $d$ is neither composite nor a sum of composite numbers, better bounds are available. Besides the classical result $q_0(2)=1$ (Hasse \cite{Hasse}) and $q_0(3)=1$ (Lewis \cite{Lewis1}) this concerns in fact $d=5,7,11$ only. 
Leep and Yeomans \cite{MR1382749} have shown $q_0(5)\leq  43$ and later this has been improved by Heath-Brown \cite{MR2595750}. He proved that a quintic form over $\mathbb{Q}_p$  possesses a non-trivial zero if $p\geq 17$.    
For septic and unidecic forms bounds $q_0(7)\leq 883$ and $q_0(11)\leq 8053$ are due to Wooley \cite{MR2413363}.
In this paper we shall establish $q_0(5)\leq 9$.
\begin{theorem}\label{maintheorem}
Let $F(x_1,\dots,x_n)=F(\mathbf{x})$ be a quintic form with at least $n\geq 26$ variables over a $p$-adic field $\mathbb{K}$ with residue class field of cardinality $q>9$. 
Then there exists a non-zero vector $\mathbf{x}\in \mathbb{K}^n$ with $F(\mathbf{x})=0$. 
\end{theorem}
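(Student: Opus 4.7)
The strategy is the well-established Birch--Davenport--Lewis pattern for $p$-adic forms: reduce the problem to one about quintic forms over the residue class field $\mathbf{k}$ by a careful choice of coordinates, solve the latter by a point-counting argument, and lift the resulting $\mathbf{k}$-rational zero to $\mathbb{K}$ by Hensel's lemma.

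Concretely, let $\pi$ be a uniformizer of $\mathbb{K}$ with ring of integers $\mathcal{O}$. After clearing denominators one may assume $F\in\mathcal{O}[\mathbf{x}]$ with at least one coefficient a unit. The first step is to put $F$ into a normalized form in the sense of Davenport and Lewis: among all forms in the $\mathrm{GL}_n(\mathbb{K})$-orbit of $F$ having integral coefficients, pick one that minimizes a suitable invariant measuring how much of $F$ can be pushed into $\pi\mathcal{O}[\mathbf{x}]$ by a linear substitution. A well-chosen such minimization guarantees that the reduction $\bar F\in\mathbf{k}[\mathbf{x}]$ essentially involves all $n$ variables and that its singular locus is of strictly smaller dimension than one would see for a generic quintic defined by arbitrary coefficients.

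Once the form is normalized, the task becomes a purely residue-field problem: show that a suitably non-degenerate quintic form in $n\geq 26$ variables over a finite field with $q>9$ elements has a non-singular zero. The affine hypersurface $\{\bar F=0\}$ contains of order $q^{n-1}$ points by Lang--Weil type estimates, while its singular subvariety contributes strictly fewer points because of the controlled codimension coming from normalization. For $q>9$ and $n\geq 26$ the main term dominates, producing a non-singular $\mathbf{k}$-rational zero of $\bar F$, which Hensel's lemma then lifts to a non-trivial $\mathbb{K}$-zero of $F$.

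The main obstacle, as is typical in this circle of ideas, is the normalization step itself together with the case analysis it generates. A ``one-shot'' normalization succeeds only in favourable cases; in general one must split the variables into blocks according to the $\pi$-adic orders of the relevant coefficients, re-normalize each block, and control the interaction between blocks by a fallback argument whenever some reduced sub-form has too small a rank. Forcing this iterative scheme to terminate in a configuration whose residue-field version can be handled by point counting under the weak hypothesis $q>9$ --- rather than under Heath-Brown's stronger hypothesis $p\geq 17$ --- is where the quantitative gain of Theorem~\ref{maintheorem} must be extracted, and is the technically demanding heart of the argument.
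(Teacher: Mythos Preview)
Your outline has a genuine gap at the residue-field step, and it is precisely the step where the new content of the theorem lies.

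First, the normalization does not yield a reduced form that ``essentially involves all $n$ variables''. What the minimisation of Laxton--Lewis together with Leep--Yeomans actually gives (Lemma~\ref{lemma2}) is only that $\theta(F)$ is non-degenerate in at least $6+s$ variables, where $s$ is the maximal dimension of a linear space contained in $Z(\theta(F))$. Since one has no a priori lower bound on $s$ beyond $s\geq 1$, the residue-field problem is not one about a quintic in $26$ variables but, in the worst case, about a quintic in as few as $7$ variables. A Lang--Weil count in $n\geq 26$ variables is therefore not available.

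Second, even on the form one does get, the Lang--Weil route is exactly the argument of Leep and Yeomans, and it only yields $q\geq 43$; Heath-Brown's refinement pushes this to primes $p\geq 17$. Pure point counting against the singular locus simply does not separate the main term from the error for $q\in\{11,13,16,25,27,32\}$, which is the range the theorem must cover. The paper's proof does something entirely different there: it builds, via a combinatorial argument about maximal linear subspaces of $Z(\theta(F))$, four linearly independent zeros $\mathbf{z}_1,\dots,\mathbf{z}_4$ with $\langle\mathbf{z}_i,\mathbf{z}_j\rangle\nsubseteq Z(\theta(F))$ for all $i<j$; this forces the restriction $f(x_1\mathbf{z}_1+\cdots+x_4\mathbf{z}_4)$ into one of four explicit families $g_1,\dots,g_4$ with few free parameters, and an exhaustive computer search over those families for each relevant $q$ produces the required non-singular zero. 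Your sketch contains neither the subspace-selection argument nor the finite classification that makes a computer verification feasible, so as written it cannot reach below $q=17$.
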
 
The proof relies on a $p$-adic minimisation procedure applicable to forms of degree $d=2,3,5,7$ and $11$
 which has been 
developed by Lewis~\cite{Lewis1}, Birch and Lewis~\cite{MR0123534} and Laxton and Lewis~\cite{MR0175884}. They showed that one may assume that $F$ is reduced, that is, the resultant of the partial derivatives does not vanish and is of minimal normalised $p$-adic valuation. It then follows from a result of Leep and Yeomans that  the reduction of $F$ over the residue class field, denoted by $\theta(F)$,   is a non-degenerate form with at least $6+s$ variables, where $s$ is the maximal affine dimension of a vector space on which $\theta(F)$ vanishes. 
If $\theta(F)$  possesses a non-singular zero, it can be lifted by Hensel's Lemma to a non-trivial zero of $F$. We recall that a non-singular zero is one  which is not a simultaneous zero of the partial derivatives. \\
We shall use certain properties of quintic forms to choose a suitable  subspace and show that it contains a non-singular zero.
For $q=11,13,16,25,27,32$ this is accomplished with the help of computer calculations. 
The author was able to carry those out on his personal notebook.
This,  together with the previously mentioned results of Leep and Yeomans and Heath-Brown, yields Theorem \ref{maintheorem}.\\
There is numerical evidence to suggest that the imposed constraint on $q$ can be further reduced.
Given the current state of technology, it certainly seems doubtful to expect an answer for all $q$
at this stage.
\section{Preliminaries}
Let $\mathbb{K}$ denote a $p$-adic field with normalised valuation $\nu$, residue class field $\mathbb{F}_q$ and ring of integers $\mathcal{O}_{\mathbb{K}}$. As we are interested in a zero, we may assume from now on that $F$ has coefficients in $\mathcal{O}_{\mathbb{K}}$ and is non-degenerate.\\ 
 We call two forms $F$ and $G$ over $\mathcal{O}_{\mathbb{K}}$ equivalent if there exists a matrix $A\in GL_n(\mathbb{K})$ and $c\in \mathbb{K}^{\times}$ such that $cF(A\mathbf{x})=G(\mathbf{x})$. 
In order to state the first lemma we denote by $\mathcal{I}(F)$ the resultant of the $n$ partial derivatives of $F$. Laxton and Lewis have shown that if  $\mathcal{I}(F)=0$, then there exists a sequence of forms $F_i$ with $\mathcal{I}(F_i)\neq 0$ converging to $F$. This observation results in the following lemma. 
\begin{lemma}[{\cite[Corollary to Lemma 6]{MR0175884}}]\label{Lemma1}
In order to prove that any form of degree $d$ over a $p$-adic field $\mathbb{K}$ in $n>d^2$ variables has a non-trivial zero  it is sufficient to prove this fact for forms with $\mathcal{I}(F)\neq 0$.
\end{lemma}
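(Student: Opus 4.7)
The plan is to argue by a standard $p$-adic compactness/continuity argument, using the approximation result of Laxton and Lewis that is quoted in the paragraph before the lemma. Suppose the statement has been verified for all forms of degree $d$ in $n>d^2$ variables whose invariant $\mathcal{I}$ is non-zero, and let $F$ be an arbitrary form of degree $d$ in $n>d^2$ variables over $\mathcal{O}_{\mathbb{K}}$ with $\mathcal{I}(F)=0$. By the quoted result of Laxton and Lewis there exists a sequence of forms $F_i$ of degree $d$ in the same number of variables with $\mathcal{I}(F_i)\neq 0$ and $F_i\to F$ coefficient-wise (with respect to the $p$-adic valuation $\nu$).

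By hypothesis, each $F_i$ has a non-trivial zero $\mathbf{x}_i\in\mathbb{K}^n\setminus\{0\}$. Since $F_i$ is homogeneous of degree $d$, we may rescale $\mathbf{x}_i$ by a suitable power of a uniformiser so that $\mathbf{x}_i\in\mathcal{O}_{\mathbb{K}}^n$ and $\min_{1\leq j\leq n}\nu(x_{i,j})=0$; in other words, at least one coordinate of $\mathbf{x}_i$ is a unit. This places every $\mathbf{x}_i$ in the compact set
\[
S=\{\mathbf{x}\in\mathcal{O}_{\mathbb{K}}^n:\ \min_{1\leq j\leq n}\nu(x_{j})=0\},
\]
which is a closed subset of the compact space $\mathcal{O}_{\mathbb{K}}^n$ and therefore itself compact.

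By compactness we may pass to a subsequence and assume $\mathbf{x}_i\to\mathbf{x}$ for some $\mathbf{x}\in S$. In particular $\mathbf{x}\neq\mathbf{0}$, since the condition $\min_j\nu(x_j)=0$ is preserved under $p$-adic limits. The evaluation map $(G,\mathbf{y})\mapsto G(\mathbf{y})$ is continuous in both arguments, so
\[
F(\mathbf{x})=\lim_{i\to\infty}F_i(\mathbf{x}_i)=0,
\]
which yields the desired non-trivial zero of $F$. The only subtle point is ensuring that the $\mathbf{x}_i$ can be normalised to lie in a common compact set; this is handled by homogeneity as above, and is the standard device that makes the passage from the open (dense) locus $\{\mathcal{I}\neq 0\}$ to its closure possible.
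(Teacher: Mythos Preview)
Your argument is correct and is precisely the standard compactness argument the paper alludes to: the paper does not give a proof of this lemma beyond citing Laxton--Lewis and noting the approximation $F_i\to F$ with $\mathcal{I}(F_i)\neq 0$, and your normalisation of the zeros to primitive vectors in $\mathcal{O}_{\mathbb{K}}^n$ followed by extraction of a convergent subsequence is exactly how one completes that sketch.
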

We call $F$ reduced if $\mathcal{I}(F)\neq 0$ and $\nu(\mathcal{I}(F))$ is minimal among all forms equivalent to $F$.
Thus we may assume by Lemma \ref{Lemma1}  that $F$ is reduced. 
This yields suitable implications on the number of variables 
 of $\theta(F)$.
\begin{lemma}[{\cite[Proposition 4.3]{MR1382749}}] \label{lemma2}
Let $F$ be a reduced quintic form in at least $26$ variables over a $p$-adic field and $s\geq 0$ be an integer such that $\theta(F)$ vanishes on an affine $s$-dimension linear plane $V$. If $s>1$ we assume in addition that $q\geq 5$. 
We then obtain that $\theta(F)$ is a non-degenerate form in at least $6+s$ variables.
\end{lemma}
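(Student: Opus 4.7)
The plan is a proof by contradiction using the $p$-adic minimisation technique of Lewis, Birch--Lewis, and Laxton--Lewis. If $\theta(F)$ either were degenerate or involved fewer than $6+s$ essential variables, the aim will be to exhibit a form $G=cF(A\mathbf{x})$ equivalent to $F$ with $\nu(\mathcal{I}(G))<\nu(\mathcal{I}(F))$, contradicting the hypothesis that $F$ is reduced.

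First I would normalise the geometric data. After a change of variables in $GL_n(\mathcal{O}_\mathbb{K})$ (lifting an $\mathbb{F}_q$-transformation sending $V$ into a standard coordinate flat), one may assume $V$ corresponds to $\{x_{s+1}\equiv\cdots\equiv x_n\equiv0\pmod{\pi}\}$. Stratifying the form by the total degree $j$ in the transverse variables $x_{s+1},\ldots,x_n$,
\[
F=F_0+F_1+F_2+F_3+F_4+F_5,
\]
the hypothesis $\theta(F)|_V=0$ forces every coefficient of $F_0$ to have $\nu\geq 1$.

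The main technical step is the diagonal substitution $A=\operatorname{diag}(I_s,\pi I_{n-s})$ combined with the scaling $c=\pi^{-1}$, which produces the equivalent integral form
\[
G(\mathbf{x})=\pi^{-1}F(A\mathbf{x})=\pi^{-1}F_0+F_1+\pi F_2+\pi^{2}F_3+\pi^{3}F_4+\pi^{4}F_5.
\]
The standard transformation rule $\mathcal{I}\bigl(cF(A\mathbf{x})\bigr)=c^{n(d-1)^{n-1}}(\det A)^{d(d-1)^{n-1}}\mathcal{I}(F)$ then yields
\[
\nu(\mathcal{I}(G))-\nu(\mathcal{I}(F))=(d-1)^{n-1}\bigl(d(n-s)-n\bigr),
\]
so minimality of $\nu(\mathcal{I}(F))$ forces $d(n-s)\geq n$, i.e.\ $n\geq ds/(d-1)=5s/4$.

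The main obstacle is that this naive estimate falls well short of the target $n\geq 6+s$. To close the gap one has to exploit the finer structure that survives the substitution: $\theta(G)$ reduces to $\bar F_1$, a form that is linear in the transverse block of $n-s$ variables and of degree $4$ in the first $s$, so any additional vanishing of $\theta(G)$ on a linear subspace permits a second round of substitution and a further decrement of $\nu(\mathcal{I})$. The hypothesis $q\geq 5$ (needed only when $s>1$) would enter at this iterated step through an $\mathbb{F}_q$-existence argument—typically of Chevalley--Warning type, or a direct counting estimate on $\bar F_1$—guaranteeing the requisite additional zero; the cases $s=0,1$ reduce to standard base estimates not requiring any restriction on $q$. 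Iterating the substitution and carefully bookkeeping the cumulative decrement in $\nu(\mathcal{I})$ is what yields both the sharp bound $n\geq 6+s$ and the non-degeneracy of $\theta(F)$, since any degeneracy of $\theta(F)$ would itself produce a vanishing subspace that could be fed back into the same minimisation mechanism, contradicting the assumed minimality of $\nu(\mathcal{I}(F))$.
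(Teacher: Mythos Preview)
The paper does not prove this lemma at all; it is quoted verbatim as Proposition~4.3 of Leep--Yeomans, with no argument supplied. So there is no in-paper proof to compare your sketch against.

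As for the sketch itself, there is a genuine gap that you yourself flag but do not close. Your substitution $A=\operatorname{diag}(I_s,\pi I_{n-s})$, $c=\pi^{-1}$ together with the transformation law gives $5(n-s)\ge n$, i.e.\ $s\le 4n/5$. This is a constraint relating $s$ to $n$, the total number of variables of $F$; but the lemma is a statement about $m$, the number of \emph{essential} variables of $\theta(F)$. You never introduce $m$ into the argument, and the inequality you obtain is vacuous for the purpose at hand (with $n\ge 26$ it says only $s\le 20$). The observation you are missing is that if, after a change of variables, $\theta(F)$ depends only on $x_1,\dots,x_m$, then $\theta(F)$ already vanishes on the $(n-m)$-plane $\{x_1=\cdots=x_m=0\}$, and this must be combined with the given $s$-plane $V$ and fed into a weighted substitution treating the three blocks (the vanishing plane of $f$ inside $\mathbb{F}_q^m$, the remaining essential variables, and the non-essential variables) with different powers of $\pi$. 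Leep--Yeomans carry this out; the hypothesis $q\ge 5$ for $s>1$ enters in their argument at a specific combinatorial step, not through the unspecified ``Chevalley--Warning type'' iteration you gesture at. Your remark that $\theta(G)=\bar F_1$ is linear in the transverse block is correct but does not by itself produce any decrement in $\nu(\mathcal I)$, let alone the sharp constant $6$. As written, the proposal is a plan with the crucial middle portion missing.
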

 The next lemma  shows in particular that $s\geq 1$.
Throughout this paper we shall denote by $Z(f)$ the set of projective zeros of a form $f$ over $\mathbb{F}_q$.
\begin{lemma}[Chevalley-Warning Theorem] \label{Warning} 
Let $f$ be a form  of degree $d$ over $\mathbb{F}_q$ in $n$ variables. If $n>d$ we have
\begin{align}|Z(f)|\geq \frac{{q^{n-d}-1}}{q-1}\notag.\end{align}
\end{lemma}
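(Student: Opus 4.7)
The plan is to count affine zeros in $\mathbb{F}_q^n$, invoke Warning's quantitative refinement of the Chevalley--Warning congruence, and then descend to the projective count. Write $N:=\#\{\mathbf{x}\in\mathbb{F}_q^n : f(\mathbf{x})=0\}$. Since $f$ is a form, $f(\mathbf{0})=0$, so $N\geq 1$.

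The descent from affine to projective zeros is elementary. Because $f(\lambda\mathbf{x}) = \lambda^d f(\mathbf{x})$, the group $\mathbb{F}_q^{\times}$ acts freely on $\mathbb{F}_q^n\setminus\{\mathbf{0}\}$ by scalar multiplication, preserving the set of zeros, and each orbit corresponds to exactly one projective zero. Consequently
\[ |Z(f)| \;=\; \frac{N-1}{q-1}, \]
so the lemma will follow once $N\geq q^{n-d}$ is established.

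The main work lies in this affine lower bound. I would begin with the identity
\[ N \;\equiv\; \sum_{\mathbf{x}\in\mathbb{F}_q^n}\bigl(1-f(\mathbf{x})^{q-1}\bigr) \pmod{p}, \]
which holds because $1-y^{q-1}$ is the characteristic function of $\{0\}\subset\mathbb{F}_q$. After expanding $f^{q-1}$, the only monomials $\mathbf{x}^{\mathbf{a}}$ surviving the sum $\sum_{\mathbf{x}}\mathbf{x}^{\mathbf{a}}$ modulo $p$ are those with every component $a_i$ a positive multiple of $q-1$, forcing total degree at least $n(q-1)$; but $\deg(f^{q-1})=d(q-1)<n(q-1)$, which already yields the classical congruence $p\mid N$. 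To sharpen this to $N\geq q^{n-d}$ one slices $\mathbb{F}_q^n$ into affine $(n-d)$-dimensional fibers and applies the congruence fiber by fiber, combined with a short auxiliary counting argument. This refinement, due to Warning, is the main obstacle; the affine-to-projective translation and the concluding inequality are then routine, and in practice one would simply cite the standard reference.
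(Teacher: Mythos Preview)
The paper does not actually prove this lemma: immediately after the statement it reads ``A proof of this classical result can be found in \cite{2847218848221}'' and moves on. So there is no argument to compare against beyond the implicit appeal to the literature.

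Your sketch is the standard route and is correct in outline: the identity $|Z(f)|=(N-1)/(q-1)$ reduces matters to Warning's second theorem $N\geq q^{n-d}$, and the existence of the trivial zero $\mathbf{0}$ guarantees $N\geq 1$ so that Warning applies. The only part you leave genuinely unproved is the sharpening from $p\mid N$ to $N\geq q^{n-d}$; your description of ``slicing into $(n-d)$-dimensional fibers and applying the congruence fiber by fiber'' is a gesture at one of the known arguments rather than a proof, but since you explicitly flag this as the point where one cites the reference, your write-up is in the same spirit as the paper's own treatment.
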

A proof of this classical result can be found in \cite{2847218848221}.
Lemmas \ref{lemma2} and \ref{Warning} yield the following  consequence. 
\begin{corollary} \label{Cor1} 
Let $F$ be a quintic form in at least $26$ variables over $\mathcal{O}_{\mathbb{K}}$ that does not have a non-trivial zero. Let $s$ be as defined  in Lemma \ref{lemma2}. 
We then have 
\begin{align}|Z(\theta(F))|\geq \frac{q^{s+1}-1}{q-1}\notag.\end{align}
\end{corollary}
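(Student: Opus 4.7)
The plan is to put ourselves in the setting of Lemma~\ref{lemma2} and then read off the zero count from Chevalley--Warning. First, since $F$ has no non-trivial zero, I would like to assume that $F$ is reduced. Lemma~\ref{Lemma1} allows us to assume $\mathcal{I}(F)\neq 0$: if $\mathcal{I}(F)=0$, one approximates $F$ in $\mathcal{O}_{\mathbb{K}}$-coefficients by forms $F_i$ with $\mathcal{I}(F_i)\neq 0$, and a standard $p$-adic compactness argument on normalised zeros shows that an $F_i$ sufficiently close to $F$ inherits the absence of non-trivial zeros (a convergent subsequence of zeros of the $F_i$ would give a zero of $F$). Once $\mathcal{I}(F)\neq 0$, we pass within the $GL_n(\mathbb{K})$-equivalence class of $F$ to a representative realising the minimum of $\nu(\mathcal{I}(\cdot))$; equivalent forms have the same zero structure, so this preserves the hypothesis and produces a reduced $F$.

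With $F$ reduced, Lemma~\ref{lemma2} applies and yields that $\theta(F)$ is a non-degenerate quintic form in some number $n'\geq 6+s$ of variables. In particular $n'>5=\deg\theta(F)$, so Lemma~\ref{Warning} applied to $\theta(F)$ gives
\begin{align}
|Z(\theta(F))| \;\geq\; \frac{q^{n'-5}-1}{q-1}\;\geq\;\frac{q^{(6+s)-5}-1}{q-1}\;=\;\frac{q^{s+1}-1}{q-1},\notag
\end{align}
which is exactly the asserted bound.

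The only step that requires real thought is the reduction to a reduced $F$; everything after that is a one-line application of Chevalley--Warning with $d=5$ and $n'-d\geq s+1$. I do not expect any genuine obstacle, as the reduction is precisely the setup already motivated in the introduction, and the admissibility of Lemma~\ref{lemma2} for $s>1$ is covered by the standing assumption $q>9$ from Theorem~\ref{maintheorem}.
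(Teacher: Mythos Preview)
Your core argument---apply Lemma~\ref{lemma2} to get $\theta(F)$ non-degenerate in at least $6+s$ variables, then feed $n'-5\geq s+1$ into Chevalley--Warning---is exactly the paper's proof (the paper simply says the corollary follows from Lemmas~\ref{lemma2} and~\ref{Warning}).

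The reduction step you prepend is not part of the paper's argument for the corollary; the paper carries out that reduction once, at the beginning of Section~3, \emph{before} the corollary is ever invoked. Taken literally your reduction is also slightly off: replacing $F$ by an equivalent reduced representative $G$ changes both $\theta(F)$ and the parameter $s$, so proving the bound for $\theta(G)$ does not recover it for the original $\theta(F)$. The corollary is meant to be read for $F$ already reduced (as Lemma~\ref{lemma2} requires), which makes your preliminary step unnecessary and, if interpreted as handling arbitrary $F$, not quite valid.
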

A zero of $\theta(F)$ is not sufficient for a non-trivial zero of $F$, instead we require a non-singular zero. Once we have found one, we can apply the version of Hensel's Lemma given below.
\begin{lemma}[Hensel's Lemma]\label{sdkdkwiuujdjude}
Let $F\in \mathcal{O}_{\mathbb{K}}[x_1,\dots,x_n]$. If $\theta(F)$ has a non-singular zero, then $F$ has a non-trivial zero in $\mathbb{K}^n$.
\end{lemma}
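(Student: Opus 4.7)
The plan is to reduce the statement to the classical one-variable Hensel's Lemma. Suppose $\theta(F)$ has a non-singular zero $\bar{\mathbf{a}}=(\bar a_1,\dots,\bar a_n)\in\mathbb{F}_q^n$, so $\theta(F)(\bar{\mathbf{a}})=0$ and some partial derivative, say $\partial_n\theta(F)(\bar{\mathbf{a}})$, is non-zero in $\mathbb{F}_q$. I would pick any lift $\mathbf{a}=(a_1,\dots,a_n)\in\mathcal{O}_{\mathbb{K}}^n$ of $\bar{\mathbf{a}}$. Since $\bar{\mathbf{a}}\neq\mathbf{0}$ in $\mathbb{F}_q^n$, at least one coordinate of $\mathbf{a}$ is a unit in $\mathcal{O}_{\mathbb{K}}$; in particular $\mathbf{a}$ is non-zero and will remain so under any perturbation by elements of the maximal ideal.

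Next, I would consider the single-variable polynomial
\[
g(t)=F(a_1,\dots,a_{n-1},a_n+t)\in\mathcal{O}_{\mathbb{K}}[t].
\]
Reduction modulo a uniformiser $\pi$ gives $g(0)\equiv\theta(F)(\bar{\mathbf{a}})=0$ and $g'(0)\equiv\partial_n\theta(F)(\bar{\mathbf{a}})\not\equiv 0$, hence $\nu(g(0))\geq 1$ while $\nu(g'(0))=0$. The hypothesis $\nu(g(0))>2\nu(g'(0))$ of the classical one-variable Hensel's Lemma is therefore satisfied, and it yields a $t_0\in\pi\mathcal{O}_{\mathbb{K}}$ with $g(t_0)=0$. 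Setting $\mathbf{x}=(a_1,\dots,a_{n-1},a_n+t_0)$ gives $F(\mathbf{x})=0$, and $\mathbf{x}$ is non-zero because it agrees with $\mathbf{a}$ modulo $\pi$ and so still has a unit coordinate.

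Since this is a classical statement, I do not expect a real obstacle. The only points that require a little care are translating the definition of ``non-singular zero'' of the form $\theta(F)$ into the statement $\nu(\partial_i F(\mathbf{a}))=0$ for some coordinate $i$, and noting that because $\bar{\mathbf{a}}$ is a projective point (some entry a unit) the lifted solution $\mathbf{x}$ is automatically non-trivial in $\mathbb{K}^n$. The rest is simply the one-variable Hensel's Lemma applied along the $i$-th coordinate axis.
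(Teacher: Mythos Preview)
Your argument is correct and is exactly the standard reduction to the one-variable Hensel's Lemma. The paper itself does not give a proof of this lemma; it simply refers the reader to Greenberg's \emph{Lectures on Forms in Many Variables} for a discussion. So there is nothing to compare against beyond noting that your approach is the usual one.

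One small remark: the lemma as stated in the paper allows $F$ to be an arbitrary polynomial in $\mathcal{O}_{\mathbb{K}}[x_1,\dots,x_n]$, in which case $\bar{\mathbf{a}}=\mathbf{0}$ could in principle be a non-singular zero and the ``non-trivial'' conclusion would need an extra word. You implicitly (and rightly) use that in the paper's setting $F$ is a quintic form, so every partial derivative of $\theta(F)$ vanishes at the origin and a non-singular zero is automatically non-zero in $\mathbb{F}_q^n$. It might be worth making that one sentence explicit, but otherwise the proof is complete.
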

For a discussion of Hensel's Lemma see \cite{MR0241358}, for example.


\section{Proof of Theorem \ref{maintheorem}}
Let $F$ be a quintic form in  at least $26$ variables over a $p$-adic field $\mathbb{K}$ with residue class field of cardinality $q>9$. Throughout this section we shall write $f$ for the reduction $\theta(F)$. We denote the linear span of  vectors $\mathbf{v}_1,\dots,\mathbf{v}_l\in \mathbb{F}_q^n$ by $\langle \mathbf{v}_1,\dots,\mathbf{v}_l\rangle$.\\ By Lemma \ref{Lemma1} we may assume that $F$ is reduced. It then follows by Lemma \ref{lemma2}, that $f$ is a non-degenerate form in at least $6+s$ variables, where $s$ is the maximal affine dimension of a linear subspace of $Z(f)$. \\
Suppose that $f$ does not have a non-singular zero.  
We show that there are at least four linearly independent zeros \begin{align}\mathbf{z}_1, \mathbf{z}_2, \mathbf{z}_3, \mathbf{z}_4\in Z(f)\text{ such that } \langle \mathbf{z}_i,\mathbf{z}_j \rangle  \nsubseteq Z(f)  \notag \end{align} for all $1\leq i<j\leq 4$. 
 Hence the form 
\begin{align}g(x_1,x_2,x_3,x_4):=f(x_1\mathbf{z}_1+x_2\mathbf{z}_2+x_3\mathbf{z}_3+x_4\mathbf{z}_4)\notag \end{align} 
must be of a certain shape. In particular, certain coefficients of $g$  do not vanish. We then prove the existence of a non-singular zero of $g$, contrary to our assumption. This is achieved  by considering successively larger subspaces of $\langle \mathbf{z}_1,\mathbf{z}_2,\mathbf{z}_3,\mathbf{z}_4 \rangle $ and sieving out forms  possessing non-singular zeros.\\
As a first step, we prove that there are five distinct non-zero vectors \begin{align}\mathbf{z}_1, \dots, \mathbf{z}_5 \in Z(f) \notag\end{align} such that $\mathbf{z}_1$, $\mathbf{z}_2$, $\mathbf{z}_3$ are linearly independent and $f$ does not vanish on any plane spanned by two vectors of one of the quadruples \begin{align}\{\mathbf{z}_1,\mathbf{z}_2,\mathbf{z}_3,\mathbf{z}_i\}\quad \text{where $i=4,5$.}\notag\end{align} 
In order to establish this, we begin by showing that there are three distinct subspaces
$V_1, V_2, V_3\subseteq Z(f)$ 
 of maximal dimension and two zeros $\mathbf{z}_1$, $\mathbf{z}_2\in Z(f)$ such that  \begin{align}\mathbf{z}_1,\mathbf{z}_2\notin \bigcup_{i=1}^3V_i \quad \text{ and $\quad\langle \mathbf{z}_1$, $\mathbf{z}_2 \rangle \nsubseteq Z(f)$}. \notag\end{align} Secondly, we prove the existence of a third zero  $\mathbf{z}_3\in V_3\backslash(V_1\cup V_2)$ such that $\mathbf{z}_1$, $\mathbf{z}_2$, $\mathbf{z}_3$ are linearly independent. Thirdly, we show that there is a fourth zero  $\mathbf{z}_4\in V_2\backslash V_1$ completing the first quadruple and finally, we will choose a fifth zero $\mathbf{z}_5 \in V_1$ completing the second quadruple. \\
For convenience, we first state a basic lemma and give the details of the argument outlined afterwards.
\begin{lemma}[{\cite[Lemma 5.1]{MR1382749}}]\label{Lemma5} Let $f$ be a quintic form over $\mathbb{F}_q$  possessing two distinct non-trivial zeros $\mathbf{z}_1$ and $\mathbf{z}_2$.
Then $f$ either has a non-singular zero or 
 \begin{align}f(x_1\mathbf{z}_1+x_2\mathbf{z}_2)=c_{12}x_1^3x_2^2+c_{21}x_2^3x_1^2\notag \end{align} 
and   $c_{12}c_{21}=0$.
If, in addition,  $|\langle \mathbf{z}_1, \mathbf{z}_2 \rangle \cap Z(f)|\geq 3$, then $f(x_1\mathbf{z}_1+x_2\mathbf{z}_2)$ either  possesses a non-singular zero or is the zero polynomial. 
\end{lemma}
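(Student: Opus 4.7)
The plan is to reduce everything to an analysis of the binary quintic
\[
 g(x_1,x_2):=f(x_1\mathbf{z}_1+x_2\mathbf{z}_2),
\]
with $\mathbf{z}_1,\mathbf{z}_2$ taken to be linearly independent (the only meaningful case). The linking principle I would use throughout is the chain rule identity $\partial g/\partial x_i=\sum_k (\mathbf{z}_i)_k\,\partial f/\partial y_k$: whenever some $\partial g/\partial x_i$ is non-zero at a zero $(x_1,x_2)$ of $g$, some $\partial f/\partial y_k$ is non-zero at $x_1\mathbf{z}_1+x_2\mathbf{z}_2\ne\mathbf{0}$, giving a non-singular zero of $f$. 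So it suffices to look for non-singular zeros of $g$.

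Because $f$ vanishes at both $\mathbf{z}_1$ and $\mathbf{z}_2$, the variables $x_1$ and $x_2$ both divide $g$, and I would write
\[
 g(x_1,x_2)=a_1 x_1^4 x_2+a_2 x_1^3 x_2^2+a_3 x_1^2 x_2^3+a_4 x_1 x_2^4.
\]
Evaluating partials at the known zeros gives $(\partial g/\partial x_2)(1,0)=a_1$ and $(\partial g/\partial x_1)(0,1)=a_4$, so whenever $a_1\ne 0$ or $a_4\ne 0$ a non-singular zero already appears. In the remaining case $g=x_1^2 x_2^2(a_2 x_1+a_3 x_2)$; if additionally $a_2 a_3\ne 0$, then $(a_3,-a_2)$ is a further zero of $g$, and a direct calculation gives
\[
 (\partial g/\partial x_1)(a_3,-a_2)=3a_2^3 a_3^2-2a_2^3 a_3^2=a_2^3 a_3^2\ne 0.
\]
Being based only on $3-2=1$, this identity is valid in every residue characteristic, so once more a non-singular zero is produced. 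What remains is precisely $a_1=a_4=0$ together with $a_2 a_3=0$, the exceptional shape $c_{12}x_1^3 x_2^2+c_{21}x_1^2 x_2^3$ with $c_{12}c_{21}=0$ of the first assertion.

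For the second part, I would note that projective zeros of $f$ on the line $\langle \mathbf{z}_1,\mathbf{z}_2\rangle$ correspond bijectively to projective zeros of $g$ on $\mathbb{P}^1(\mathbb{F}_q)$. If $g$ has the exceptional shape but is not identically zero, then $g$ equals $c_{12}x_1^3 x_2^2$ or $c_{21}x_1^2 x_2^3$, each having only the two projective zeros $[1{:}0]$ and $[0{:}1]$. Consequently the hypothesis $|\langle \mathbf{z}_1,\mathbf{z}_2\rangle\cap Z(f)|\ge 3$ either forces $g\equiv 0$ or pushes us out of the exceptional shape, in which case the analysis of the first part exhibits a non-singular zero of $g$ itself.

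The only step requiring genuine care is making the partial-derivative computation at $(a_3,-a_2)$ uniform across all residue characteristics of $\mathbb{F}_q$; once the identity above is checked, everything else is bookkeeping about which coefficients of $g$ can still be non-zero.
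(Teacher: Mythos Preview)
Your argument is correct and follows essentially the same route as the paper: reduce to the binary quintic on the line, eliminate the $x_1^4x_2$ and $x_1x_2^4$ coefficients by examining the partials at $(1,0)$ and $(0,1)$, and then check that $(-c_{21},c_{12})$ (your $(a_3,-a_2)$) is a non-singular zero when $c_{12}c_{21}\neq 0$. Your treatment is in fact slightly more explicit than the paper's, which simply asserts that one may take $\mathbf{z}_1,\mathbf{z}_2$ to be singular zeros of $f$ and that $(-c_{21},c_{12})$ is then non-singular; you spell out the chain-rule link and verify the derivative identity $3-2=1$ holds in every characteristic.
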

\begin{proof}
We write 
\begin{align}
f(x_1\mathbf{z}_1+x_2\mathbf{z}_2)= a_1x_1^5+b_{12}x_1^4x_2+c_{12}x_1^3x_2^2+c_{21}x_2^3x_1^2+b_{21}x_2^4x_1+a_2x_2^5. \notag 
\end{align}
We may assume that $\mathbf{z}_1$ and $\mathbf{z}_2$ are singular zeros and hence 
\begin{align}f(x_1\mathbf{z}_1+x_2\mathbf{z}_2)=(c_{12}x_1+c_{21}x_2)x_1^2x_2^2\notag.\end{align}
If $c_{12}c_{21}\neq 0$ then $(-c_{21},c_{12})$ is a non-singular zero and otherwise $\langle \mathbf{z}_1, \mathbf{z}_2 \rangle \cap Z(f)=\{\mathbf{z}_1,\mathbf{z}_2\}$  or $\langle \mathbf{z}_1, \mathbf{z}_2 \rangle\subseteq Z(f)$.
\end{proof}
Since $f$ has at least $6$ variables, Lemma \ref{Warning} yields a non-trivial zero and thus we may assume $s\geq 1$.
By Corollary \ref{Cor1} we have 
\begin{align}\label{equation1}|Z(f)|> \frac{4(q^{s}-1)}{q-1},\end{align} provided $q\geq 4$.
 Thus we can  pick four distinct subspaces \begin{align}V_1,V_2,V_3,V_4\subseteq Z(f)\notag\end{align} such that $V_i$ is of maximal dimension for $1\leq i \leq 4$.  By equation (\ref{equation1}) 
we can choose an additional
zero $\mathbf{z}_1\in Z(f)\backslash\bigcup_{i=1}^4V_i$. We set $S_3:=\bigcup_{i=1}^3 V_i$ and 
 show that there exists a vector $\mathbf{z}_2\in V_4\backslash S_3$ such that $\langle \mathbf{z}_1,\mathbf{z}_2  \rangle \nsubseteq Z(f)$. Suppose by the contrary that \begin{align}\text{for all $\mathbf{z}\in V_4\backslash S_3$ we have $ \langle \mathbf{z}_1,\mathbf{z} \rangle \subseteq Z(f)$.}\label{equation2}\end{align}
If $V_4\cap S_3=\{0\}$, then (\ref{equation2}) contradicts the maximality of $V_4$ and otherwise we shall argue as follows. Let $\mathbf{s}\in V_4\cap S_3$ be arbitrary. As $V_4$ is distinct from $S_3$ we can choose a non-zero vector $\mathbf{v}\in V_4\backslash S_3$ and consider the  projective line $L_{\mathbf{s}}:=\langle \mathbf{v},\mathbf{s} \rangle$. Since $\mathbf{v}\notin S_3$, the projective line $L_{\mathbf{s}}$ can not contain two vectors of $V_i$ for each $1\leq i\leq 3$.  Thus the intersection $L_{\mathbf{s}} \cap S_3$ contains at most three non-zero points.  On the other hand, since $q\geq 5$, there are at least three points $\mathbf{p}_1,\mathbf{p}_2,\mathbf{p}_3 \in  L_{\mathbf{s}}$ not contained in $S_3$. It follows from our assumption (\ref{equation2}) that $\langle \mathbf{z}_1,\mathbf{p}_i \rangle\subseteq Z(f)$ for all $1\leq i \leq 3$. 
\begin{lemma}\label{Lemma6} Let $f$ be a quintic form over $\mathbb{F}_q$ without a non-singular zero, $L$ a projective line,  $\mathbf{z}$  a non-zero point not on $L$ and $\mathbf{p}_1,\mathbf{p}_2,\mathbf{p}_3\in L$ three distinct non-zero points.  Assume that \begin{align}\langle \mathbf{p}_i,\mathbf{z}  \rangle \subseteq Z(f) \quad \text{for all $1\leq i \leq 3$}\notag.\end{align}  
Then $\langle L,\mathbf{z} \rangle \subseteq Z(f)$. 
\end{lemma}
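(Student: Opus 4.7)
The plan is to translate the hypothesis into a divisibility statement for the restriction of $f$ to $\langle L, \mathbf{z}\rangle$, and then show that the remaining factor is forced to vanish by the absence of non-singular zeros.

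First I would choose $\mathbf{p}_1, \mathbf{p}_2$ as a basis of $L$ and, after rescaling $\mathbf{p}_2$ and $\mathbf{p}_3$ (which are defined only up to non-zero scalars), arrange that $\mathbf{p}_3 = \mathbf{p}_1 + \mathbf{p}_2$. Setting
\begin{align}
g(x_1, x_2, x_3) := f(x_1 \mathbf{p}_1 + x_2 \mathbf{p}_2 + x_3 \mathbf{z}), \notag
\end{align}
the desired conclusion $\langle L, \mathbf{z}\rangle \subseteq Z(f)$ will follow once $g$ is shown to be identically zero. For each $i$ the binary quintic $f(x_1 \mathbf{p}_i + x_2 \mathbf{z})$ has all $q+1 \geq 6$ projective zeros by hypothesis, and therefore vanishes identically; hence $g$ is zero on each of the planes $x_1 = 0$, $x_2 = 0$ and $x_2 = x_1$. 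The three pairwise-coprime linear forms $x_1$, $x_2$, $x_2 - x_1$ thus all divide $g$, so $g = x_1 x_2 (x_2 - x_1) g_1$ for some ternary quadratic $g_1$, and the problem reduces to showing $g_1 \equiv 0$.

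Next I would use that, by the chain rule, a non-singular zero of $g$ yields a non-singular zero of $f$, so $g$ admits no non-singular zero either. I then inspect the candidate zeros $(0, 1, \gamma)$, $(1, 0, \gamma)$ and $(1, 1, \gamma)$ as $\gamma$ ranges over $\mathbb{F}_q$. Differentiating $g = x_1 x_2(x_2 - x_1) g_1$ and evaluating at any of these points, the term $x_1 x_2 (x_2 - x_1) \nabla g_1$ vanishes automatically, so $\nabla g$ collapses to a non-zero scalar multiple of $g_1$ evaluated at the point. Singularity therefore forces $g_1(0, 1, \gamma) = g_1(1, 0, \gamma) = g_1(1, 1, \gamma) = 0$ for every $\gamma \in \mathbb{F}_q$. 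Each of these is a polynomial in $\gamma$ of degree at most two, so with $q \geq 5$ all three must vanish identically as polynomials in $\gamma$. Writing $g_1 = a_{11} x_1^2 + a_{12} x_1 x_2 + a_{22} x_2^2 + a_{13} x_1 x_3 + a_{23} x_2 x_3 + a_{33} x_3^2$, the resulting linear systems together pin every $a_{ij}$ to zero.

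Hence $g_1 \equiv 0$ and therefore $g \equiv 0$, which is exactly $\langle L, \mathbf{z}\rangle \subseteq Z(f)$. The point requiring most care is the gradient calculation at $(1, 1, \gamma)$: there $\partial g/\partial x_3 = 0$ is automatic, and one must verify that both $\partial g/\partial x_1$ and $\partial g/\partial x_2$ are non-zero multiples of $g_1(1, 1, \gamma)$, so that the full gradient really does vanish if and only if $g_1$ vanishes along the entire line $\{(1, 1, \gamma)\}$.
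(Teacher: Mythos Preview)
Your argument is correct. The only extra hypothesis you use is $q\geq 5$, needed so that set-theoretic vanishing of a binary quintic on $\mathbb{P}^1(\mathbb{F}_q)$ forces the polynomial to be identically zero; this is harmless in the paper's setting. Your gradient computation checks out: at $(0,1,\gamma)$, $(1,0,\gamma)$, $(1,1,\gamma)$ the factor $x_1x_2(x_2-x_1)$ kills all terms except one product-rule contribution, and that contribution is $\pm g_1$ evaluated at the point, so singularity forces $g_1$ to vanish along all three affine lines and hence identically.

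The paper takes a different, purely geometric route. Rather than factoring the restriction and analysing the gradient, it picks an arbitrary point $\mathbf{x}\in\langle L,\mathbf{z}\rangle$ outside the three given lines, passes a projective line $H$ through $\mathbf{x}$ avoiding $\mathbf{z}$, and observes that $H$ meets each $\langle\mathbf{p}_i,\mathbf{z}\rangle$ in a distinct point; thus $H$ carries at least three zeros of $f$, and Lemma~5 then forces $H\subseteq Z(f)$, giving $\mathbf{x}\in Z(f)$. Their argument works for every $q$ and never writes down coordinates, but it leans on the structural Lemma~5 about binary quintics without non-singular zeros. Your approach is more hands-on and self-contained: it bypasses Lemma~5 entirely and yields the stronger conclusion that $f$ restricted to $\langle L,\mathbf{z}\rangle$ is the zero polynomial, at the cost of the mild assumption $q\geq 5$ and an explicit coefficient chase.
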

\begin{proof}
Let $\mathbf{x}\in \langle L, \mathbf{z} \rangle$ and $\mathbf{x}\notin \bigcup_{i=1}^3 \langle \mathbf{p}_i,\mathbf{z} \rangle$. 
 There exists a projective line $H$ in $\langle L, \mathbf{z} \rangle$ through $ \mathbf{x}  $ that does not contain $ \mathbf{z} $. Since we have assumed that $\mathbf{x}\notin  \langle \mathbf{p}_i,\mathbf{z} \rangle$ and $\langle \mathbf{p}_i,\mathbf{z} \rangle$ has co-dimension $1$ in $\langle L,\mathbf{z} \rangle$, the line $H$ intersects $\langle \mathbf{p}_i,\mathbf{z} \rangle$ in exactly one point $\mathbf{s}_i$, say, for each $1\leq i\leq 3$. Since $\bigcap_{i=1}^3\langle \mathbf{p}_i,\mathbf{z} \rangle=  \mathbf{z} $ and $\mathbf{z} \notin H$, we conclude that there are at least three distinct points, namely  $\mathbf{s}_i$ for $1\leq i \leq 3$, in $H$ that are contained in $Z(f)$. By Lemma \ref{Lemma5}  we have $H\subseteq Z(f)$ and hence $\mathbf{x}\in Z(f)$. We conclude that $\langle L, \mathbf{z} \rangle \subseteq Z(f)$.
\end{proof}
By applying Lemma \ref{Lemma6} we have $\langle \mathbf{z}_1,V_4 \rangle\subseteq Z(f)$,  contrary to the maximality of the dimension of $V_4$.
We conclude that there are three non-identical subspaces $V_1,V_2,V_3\subseteq Z(f)$ of maximal dimension and two zeros $\mathbf{z}_1,\mathbf{z}_2 \notin \bigcup_{i=1}^3 V_i$ such that \begin{align}\langle \mathbf{z}_1,\mathbf{z}_2 \rangle \cap Z(f)=\{\mathbf{z}_1,\mathbf{z}_2\}\notag. \end{align}
As mentioned above we shall proceed by proving the existence of a third vector $\mathbf{z}_3 \in V_3\backslash (V_1\cup V_2)$ with the property $\langle \mathbf{z}_i,\mathbf{z}_j \rangle \nsubseteq Z(f)$ for all $1\leq i<j\leq 3$. Suppose by  the contrary that for every $\mathbf{z} \in V_3\backslash (V_1\cup V_2)$ at least one of the following holds \begin{align}\label{equation3}\langle  \mathbf{z},\mathbf{z}_1 \rangle \subseteq Z(f)\quad \text{or} \quad \langle \mathbf{z},\mathbf{z}_2 \rangle \subseteq Z(f).\end{align}
We set $S_2:=V_1\cup V_2$ for shorter notation and shall argue that we may assume $S_2\cap V_3=\{0\}$. Suppose there exists at least one non-zero vector $\mathbf{s}\in S_2\cap V_3$. We then pick a vector $\mathbf{v}\in V_3\backslash  S_2$ and define 
for any vector $\mathbf{s}\in S_2\cap V_3$ 
the projective line $L_{\mathbf{s}}:=\langle \mathbf{s},\mathbf{v} \rangle$.  We show that 
\begin{align}\label{equation4}\langle L_{\mathbf{s}},\mathbf{z}_1 \rangle \subseteq Z(f)\quad \text{or} \quad \langle L_{\mathbf{s}},\mathbf{z}_2 \rangle \subseteq Z(f). \end{align}
Since $\mathbf{v}\notin S_2$, neither two vectors of the subspace $V_1$ nor two of the subspace $V_2$ can be contained in $L_{\mathbf{s}}$. 
Thus there are at least $5$ projective points in $L_{\mathbf{s}}\backslash  S_2$, provided $q\geq 6$. By our assumption (\ref{equation3}) there are three points $\mathbf{p}_1,\mathbf{p}_2,\mathbf{p}_3$ among them such that $\langle \mathbf{p}_i,\mathbf{z}_k \rangle \subseteq Z(f)$ for all $1\leq i \leq 3$ and  a certain $1\leq k\leq 2$. 
Equation (\ref{equation4}) then follows from  Lemma \ref{Lemma6} and thus, we have that for every $\mathbf{z}\in V_3$ at least one of the following holds \begin{align}\label{equation5}\langle \mathbf{z},\mathbf{z}_1 \rangle \subseteq Z(f)\quad \text{or} \quad \langle \mathbf{z},\mathbf{z}_2 \rangle \subseteq Z(f). \end{align}
\begin{lemma} \label{Lemma7}  
Let $f$ be a quintic form over $\mathbb{F}_q$ without a non-singular zero, $V\subseteq Z(f)$ an $m$-dimensional subspace where $m\geq 2$ and $\mathbf{z}_1,\dots, \mathbf{z}_k$ non-trivial zeros not contained in $V$. 
We assume $q\geq 2k$ and that there exists for any projective plane $W\subseteq V$ of co-dimension $1$ an index $i\in \{1,\dots,k\}$ such that $\langle W,\mathbf{z}_i \rangle \subseteq Z(f)$. Then there exists an index $i\in \{1,\dots,k\}$ such that \begin{align}\langle V,\mathbf{z}_i \rangle \subseteq Z(f). \notag \end{align}
\end{lemma}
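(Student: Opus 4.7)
The plan is to use a pigeonhole argument to extract three hyperplanes of $V$ sharing a common index, and then to invoke Lemma \ref{Lemma6} to spread the conclusion throughout $V$. No induction on $m$ is needed.

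First, I fix any codimension-$2$ linear subspace $U\subseteq V$ (taking $U=\{0\}$ if $m=2$). The hyperplanes of $V$ containing $U$ correspond bijectively to the one-dimensional subspaces of the two-dimensional quotient $V/U$, so there are exactly $q+1$ of them. By hypothesis, each such hyperplane $W$ carries an index $i(W)\in\{1,\dots,k\}$ with $\langle W,\mathbf{z}_{i(W)}\rangle\subseteq Z(f)$. Because $q+1\geq 2k+1$, the pigeonhole principle produces three distinct hyperplanes $W_1,W_2,W_3\supseteq U$ sharing a common index $i^*$.

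Next, I show that $\langle V,\mathbf{z}_{i^*}\rangle\subseteq Z(f)$. Take an arbitrary element $\mathbf{x}=\mathbf{v}+c\mathbf{z}_{i^*}$ with $\mathbf{v}\in V$ and $c\in\mathbb{F}_q$. If $\mathbf{v}\in U$, then $\mathbf{x}\in\langle U,\mathbf{z}_{i^*}\rangle\subseteq\langle W_1,\mathbf{z}_{i^*}\rangle\subseteq Z(f)$. Otherwise $\mathbf{v}$ has non-zero image in $V/U$, and I complete this image to a basis of $V/U$ and lift to obtain a two-dimensional subspace $P\subseteq V$ containing $\mathbf{v}$ with $P\cap U=\{0\}$. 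Since each $W_j/U$ is one-dimensional and $P$ maps isomorphically onto $V/U$, $P$ meets each $W_j$ in a one-dimensional subspace, and these three intersections are pairwise distinct; coincidence of any two would force a non-zero vector of $P$ into $W_j\cap W_l=U$, contradicting $P\cap U=\{0\}$.

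Finally, I apply Lemma \ref{Lemma6} with $L=P$, external point $\mathbf{z}_{i^*}\notin V\supseteq P$, and three non-zero vectors $\mathbf{p}_j\in P\cap W_j$. Each containment $\langle\mathbf{p}_j,\mathbf{z}_{i^*}\rangle\subseteq\langle W_j,\mathbf{z}_{i^*}\rangle\subseteq Z(f)$ holds, so Lemma \ref{Lemma6} gives $\langle P,\mathbf{z}_{i^*}\rangle\subseteq Z(f)$, and in particular $\mathbf{x}\in Z(f)$. The main obstacle is the correct calibration of the pigeonhole: the hypothesis $q\geq 2k$ delivers $\lceil(q+1)/k\rceil\geq 3$, and three is exactly the number of collinear extensions demanded by Lemma \ref{Lemma6}. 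Once that threshold is recognised, the geometric verifications inside the quotient $V/U$ are routine linear algebra.
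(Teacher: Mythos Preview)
Your proof is correct and follows essentially the same strategy as the paper: both fix a codimension-$2$ subspace $U\subseteq V$, use the pigeonhole bound $q+1\geq 2k+1$ on the pencil of hyperplanes through $U$ to extract three hyperplanes $W_1,W_2,W_3$ with a common index, and then spread the vanishing across $\langle V,\mathbf{z}_{i^*}\rangle$. The only cosmetic difference is in the last step: you descend to a complementary $2$-plane $P\subseteq V$ and invoke Lemma~\ref{Lemma6}, whereas the paper works directly inside $\langle V,\mathbf{z}\rangle$, chooses a projective line through $\mathbf{x}$ missing $\bigcap_j\langle W_j,\mathbf{z}\rangle$, and appeals to Lemma~\ref{Lemma5}---which is precisely what Lemma~\ref{Lemma6} unpacks to.
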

\begin{proof}
We write $[x_1:\dots:x_m]$ for a projective point in $V$. Since $m\geq 2$ we can define the following subspaces 
\begin{align}
W_{(a,b)}:=\{[x_1:\dots : ax_{m-1}:bx_{m-1}]\mid x_i \in \mathbb{F}_q \text{  for $1 \leq i \leq m$} \} \notag 
\end{align}
for $(a,b)\in (\{1\}\times \mathbb{F}_q)\cup \{(0,1)\}$.\\ 
Since $q\geq 2k$ there are at least $2k+1$ subspaces $W_{(a,b)}$. Thus we may assume that there are at least three subspaces, $W_1$, $W_2$, $W_3$ say, among these and a zero $\mathbf{z}\in\{\mathbf{z}_1,\dots, \mathbf{z}_k\}$ such that
\begin{align}
 \langle W_i, \mathbf{z} \rangle \subseteq Z(f) \quad \text{ for $1\leq i \leq 3$.} \notag
\end{align}  
We shall complete the proof of this lemma by following Leep and Yeomans [\cite{MR1382749}, Lemma 5.3]. For $W_1,W_2,W_3$ as above, we have 
\begin{align} 
\langle W_i,\mathbf{z} \rangle \cap \langle W_j,\mathbf{z} \rangle &= \langle W_i\cap W_j, \mathbf{z}\rangle, \label{equation6}\\
\langle W_i, \mathbf{z}\rangle \cap \langle W_j,\mathbf{z} \rangle &= \bigcap_{i=1}^3 \langle W_i, \mathbf{z} \rangle \label{equation6b}
\end{align}
for any $1\leq i<j\leq 3$. 
We notice that for equation (\ref{equation6}) we have for each pair $i\neq j$ with $\langle W_i,\mathbf{z} \rangle$ and $\langle W_j,\mathbf{z} \rangle$ two non-identical $m$-dimensional planes and that $\langle W_i\cap W_j, \mathbf{z}\rangle$ is an $m-1$ dimensional plane. Equation (\ref{equation6b}) follows from (\ref{equation6})	 and the fact that
 \begin{align}W_i\cap W_j=\bigcap_{i=1}^3 W_i \quad\text{for distinct $i,j$.} \notag\end{align}\\ 
Let $\mathbf{x}$ be a point in $\langle V,\mathbf{z} \rangle \backslash  \bigcup_{i=1}^3  \langle W_i,\mathbf{z} \rangle$. We observe that $\bigcap_{i=1}^3 W_i$ has co-dimension $2$ in $ V $. Thus, we conclude by (\ref{equation6}) and (\ref{equation6b}) that $\bigcap_{i=1}^3 \langle W_i,\mathbf{z} \rangle$ has co-dimension $2$ in $\langle V,\mathbf{z} \rangle$. Hence we can choose a projective line $H$ through the point $\mathbf{x}$ that does not intersect with $\bigcap_{i=1}^3\langle W_i,\mathbf{z} \rangle $. Since $\mathbf{x}\notin \langle W_i,\mathbf{z} \rangle$  and $\langle W_i,\mathbf{z} \rangle$ has co-dimension $1$ in $\langle V,\mathbf{z} \rangle$, we conclude that there exists for each $i$ a point $\mathbf{p}_i\in \langle W_i,\mathbf{z} \rangle \cap H$. Since $ \langle W_i,\mathbf{z}\rangle \subseteq Z(f)$ and $H$ does not intersect $\bigcap_{i=1}^3\langle W_i,\mathbf{z} \rangle$ there are at least three distinct non-trivial zeros of $f$ on $H$. Thus we conclude by Lemma \ref{Lemma5} that $\langle V,\mathbf{z} \rangle \subseteq Z(f)$.
\end{proof}
We apply Lemma \ref{Lemma7} to (\ref{equation5}) and thus, we have  \begin{align}\langle V_3,\mathbf{z}_1 \rangle \subseteq Z(f)\quad \text{or} \quad \langle V_3,\mathbf{z}_2 \rangle \subseteq Z(f). \notag\end{align} However, this contradicts the maximality of the dimension of $V_3$. Moreover, the vectors $\mathbf{z}_1,\mathbf{z}_2,\mathbf{z}_3$ are linearly independent, since   by Lemma \ref{Lemma5} there are at most two zeros on the projective line $\langle \mathbf{z}_1,\mathbf{z}_2 \rangle $. Thus we have found three linearly independent vectors $\mathbf{z}_1,\mathbf{z}_2,\mathbf{z}_3$ such that \begin{align}\langle \mathbf{z}_i,\mathbf{z}_j \rangle \nsubseteq Z(f) \quad \text{for all $1\leq i<j\leq 3$}.\notag\end{align}
We show that there exists a fourth vector $\mathbf{z}_4 \in V_2\backslash V_1$ such that \begin{align}\langle \mathbf{z}_i,\mathbf{z}_j \rangle \nsubseteq Z(f) \quad \text{ for all $1\leq i<j\leq 4$.}\notag \end{align} Suppose by  the contrary that for all $\mathbf{z} \in V_2\backslash V_1$ at least one of the following holds \begin{align}\label{equation7}\langle \mathbf{z},\mathbf{z}_1 \rangle \subseteq Z(f),\quad  \langle \mathbf{z},\mathbf{z}_2 \rangle \subseteq Z(f)\quad \text{or} \quad  \langle \mathbf{z},\mathbf{z}_3 \rangle \subseteq Z(f).\end{align}
We shall argue that there is no loss of generality if we assume $V_1\cap V_2=\{0\}$. 
 As there exists a point $\mathbf{v}\in V_2\backslash V_1$ we
consider for any vector $\mathbf{s}\in  V_2\cap V_1$ the plane  $L_{\mathbf{s}}:=\langle \mathbf{s},\mathbf{v} \rangle $. We show that 
 \begin{align}\langle L_{\mathbf{s}},\mathbf{z}_1 \rangle \subseteq Z(f),\quad \langle L_{\mathbf{s}},\mathbf{z}_2 \rangle \subseteq Z(f)\quad \text{or} \quad \langle L_{\mathbf{s}},\mathbf{z}_3 \rangle \subseteq Z(f). \notag  \end{align}
Since $q\geq 7$ there are at least $7$ projective points in $L_{\mathbf{s}}$ not contained in $V_1$. Thus, by (\ref{equation7}) there are  three points $\mathbf{p}_1,\mathbf{p}_2,\mathbf{p}_3$ among them such that $\langle \mathbf{p}_i,\mathbf{z}_k \rangle \subseteq Z(f)  $ for all $1\leq i \leq 3$ and  a certain $1\leq k\leq 3$.
By Lemma \ref{Lemma6}, we have that for every $\mathbf{z}\in V_2$ at least one of the following holds \begin{align} \label{equation8}\langle \mathbf{z},\mathbf{z}_1 \rangle \subseteq Z(f), \quad \langle \mathbf{z},\mathbf{z}_2 \rangle \subseteq Z(f) \quad \text{or} \quad  \langle \mathbf{z},\mathbf{z}_3 \rangle \subseteq Z(f). \end{align}
It then follows in conjunction with Lemma \ref{Lemma7} that \begin{align}\langle V_2,\mathbf{z}_1 \rangle \subseteq Z(f),\quad \langle V_2,\mathbf{z}_2 \rangle \subseteq Z(f)\quad \text{or} \quad  \langle V_2,\mathbf{z}_3 \rangle\subseteq Z(f). \notag\end{align} However, any of those contradicts the maximality of the dimension of $V_2$ and hence we may assume the existence of a vector $\mathbf{z}_4 \in V_2\backslash V_1$ such that \begin{align}\langle \mathbf{z}_i,\mathbf{z}_j \rangle \nsubseteq Z(f)\quad \text{for all $1\leq i<j\leq 4$}.\notag\end{align}
We show that there exists a fifth vector $\mathbf{z}_5 \in V_1$ such that \begin{align}\langle \mathbf{z}_i,\mathbf{z}_5 \rangle \nsubseteq Z(f)\notag \quad \text{for all $1\leq i\leq 3$.}\end{align} Suppose by  the contrary that for all $\mathbf{z} \in V_1$ at least one of the conditions in equation (\ref{equation7}) holds. 
By Lemma \ref{Lemma7} this implies  \begin{align}\langle V_1,\mathbf{z}_1 \rangle \subseteq Z(f),\quad  \langle  V_1,\mathbf{z}_2 \rangle \subseteq Z(f) \quad \text{or} \quad  \langle  V_1,\mathbf{z}_3 \rangle \subseteq Z(f).\notag\end{align}
However, any of these contradicts the maximality of the dimension of $V_1$ and thus we conclude that there is a vector $\mathbf{z}_5 \in V_1$ such that \begin{align}\langle \mathbf{z}_i,\mathbf{z}_5 \rangle \nsubseteq Z(f)\quad \text{for all $1\leq i\leq 3$.}\notag\end{align}
In summary,  we have shown that there are two quadruples of zeros, \begin{align}\text{$\mathbf{z}_1$, $\mathbf{z}_2$, $\mathbf{z}_3$, $\mathbf{z}_4\quad$ and $\quad\mathbf{z}_1$, $\mathbf{z}_2$, $\mathbf{z}_3$, $\mathbf{z}_5$,}\notag\end{align} such that $f$ does not vanish on any two-dimensional plane spanned by two zeros of one quadruple. Moreover, we know that $\mathbf{z}_1,\mathbf{z}_2,\mathbf{z}_3$ are linearly independent. We will now estimate the number of zeros of $f$ in $\langle \mathbf{z}_1,\mathbf{z}_2,\mathbf{z}_3 \rangle$. 
\begin{lemma} \label{Lemma8}
Let $f$ be a quintic form over $\mathbb{F}_q$ with three linearly independent zeros $\mathbf{z}_1,\mathbf{z}_2,\mathbf{z}_3\in Z(f)$ such that  $\langle \mathbf{z}_i,\mathbf{z}_j \rangle \nsubseteq Z(f) $ for all $1\leq i<j\leq 3$.
 Then the following holds.\\ If $q\geq 17$, then $f$ has a non-singular zero.
 If $ 11\leq  q<17$, it possesses a non-singular zero or 
$|\langle \mathbf{z}_1,\mathbf{z}_2,\mathbf{z}_3 \rangle\cap Z(f)|= 3$ holds. 
  If $q<11$ it has a non-singular zero or 
	$|\langle \mathbf{z}_1,\mathbf{z}_2,\mathbf{z}_3 \rangle\cap Z(f)|\leq  4$ holds.
\end{lemma}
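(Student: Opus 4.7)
My plan is to assume that the ternary quintic $g(x_1,x_2,x_3) := f(x_1\mathbf{z}_1+x_2\mathbf{z}_2+x_3\mathbf{z}_3)$ has no non-singular zero (so neither does $f$ on the subspace $\langle\mathbf{z}_1,\mathbf{z}_2,\mathbf{z}_3\rangle$) and show that $Z(g)\subset\mathbb{P}^2(\mathbb{F}_q)$ is forced to be very small. Applying Lemma~\ref{Lemma5} to each of the three coordinate lines $\langle\mathbf{z}_i,\mathbf{z}_j\rangle$ (using the hypothesis that none of them is contained in $Z(f)$), the restriction of $g$ to the line $x_k=0$ has the form $c_{ij}x_i^3x_j^2 + c_{ji}x_j^3x_i^2$ with $c_{ij}c_{ji}=0$ and one of $c_{ij},c_{ji}$ nonzero. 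Reading off the monomials of $g$ that lack some variable forces
\[
g = \sum_{i\neq j} c_{ij}\,x_i^3 x_j^2 + x_1 x_2 x_3\,Q(x_1,x_2,x_3)
\]
for a ternary quadratic $Q$; the three nonzero $c_{ij}$'s encode a tournament on $\{1,2,3\}$ so up to $S_3$-symmetry I am reduced to either a transitive or a cyclic case.

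Next I argue that no projective line in $\mathbb{P}^2(\mathbb{F}_q)$ lies inside $Z(g)$. If $L\subseteq Z(g)$ then $L$ meets at least one coordinate line $\langle\mathbf{z}_j,\mathbf{z}_k\rangle$ in a zero of $g$, which must be $\mathbf{z}_j$ or $\mathbf{z}_k$ since that coordinate line has only two zeros; tracing this forces $L$ itself to be a coordinate line, contradicting the fact that coordinate lines have only two zeros. Hence every line in $\mathbb{P}^2(\mathbb{F}_q)$ contains at most two zeros of $g$, and in particular any three zeros of $g$ are non-collinear.

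The main step is to control interior zeros $\mathbf{p}=(a_1,a_2,a_3)$ with $a_1 a_2 a_3\neq 0$. For each such $\mathbf{p}$ and each $i$, the line $\langle\mathbf{z}_i,\mathbf{p}\rangle$ contains only the zeros $\mathbf{z}_i$ and $\mathbf{p}$, so Lemma~\ref{Lemma5} forces the restriction of $g$ to that line to be a binary quintic factoring as $(\mathrm{linear})^2(\mathrm{linear})^3$. Parametrising non-coordinate lines through $\mathbf{z}_1$ by $(0,\alpha,\beta)\in\mathbb{P}^1(\mathbb{F}_q)$ and writing $g(s,t\alpha,t\beta)=t^2(As^3+Bs^2t+Cst^2+Dt^3)$, the factorisation requirement translates into one of the two systems
\[
B^2 = 3AC,\ C^2 = 3BD \qquad\text{or}\qquad A = 0,\ C^2 = 4BD,
\]
where $A,B,C,D$ are explicit forms in $(\alpha,\beta)$ of degrees $2,3,4,5$ respectively. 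Crucially, since exactly one of $c_{23},c_{32}$ vanishes, the leading coefficient $D=\alpha^2\beta^2(c_{23}\alpha+c_{32}\beta)$ is nonzero whenever $\alpha\beta\neq 0$, keeping the analysis non-degenerate; a Bezout/resultant estimate on these systems then bounds the admissible directions $(\alpha,\beta)$.

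Finally I combine the analogous systems arising from the $\mathbf{z}_2$- and $\mathbf{z}_3$-perspectives, since every interior zero must give consistent solutions from all three viewpoints. The quantitative thresholds stated in the lemma emerge by comparing the degrees of the resulting eliminant polynomials with $|\mathbb{P}^1(\mathbb{F}_q)|=q+1$: for $q\geq 17$ the joint system has no consistent solution and a non-singular zero must exist; for $11\leq q<17$ only the three trivial zeros $\mathbf{z}_1,\mathbf{z}_2,\mathbf{z}_3$ survive, giving $|Z(g)|=3$; and for $q<11$ at most one additional interior zero can remain, giving $|Z(g)|\leq 4$. The main obstacle is precisely this last step: handling the two tournament types in parallel, organising the three-fold symmetric constraints, and tracking the degrees of the auxiliary eliminants accurately enough to extract the thresholds $17$ and $11$ rather than weaker bounds.
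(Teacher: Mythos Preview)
Your reduction to the normal form $g=\sum c_{ij}x_i^3x_j^2+x_1x_2x_3Q$ with exactly one of each pair $c_{ij},c_{ji}$ nonzero, and the ensuing split into the transitive and cyclic tournament shapes, is exactly what the paper does (these are the forms $t_1,t_2$ there). Your argument that no line of $\mathbb{P}^2$ lies in $Z(g)$, hence every line meets $Z(g)$ in at most two points, is also correct.

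The gap is the quantitative step. You parametrise lines through $\mathbf{z}_1$ and write down the systems $B^2=3AC,\ C^2=3BD$ or $A=0,\ C^2=4BD$ in the direction $(\alpha,\beta)$, then invoke a ``Bezout/resultant estimate'' and the three-fold symmetry to extract the thresholds $17$ and $11$. But you do not carry this out, and you yourself flag it as the main obstacle. There are concrete reasons to doubt it can be made to work as stated: the relevant binary forms in $(\alpha,\beta)$ have degrees $2,3,4,5$, so naive elimination gives degree bounds far above $1$ or $0$; the systems can degenerate (share factors) for special choices of $Q$; and in characteristics $2$ and $3$ (covering $q=8,9,16,27,32$) your cube and square conditions collapse to $B=0$ or $C=0$, which you do not address. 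The paper does \emph{not} attempt any such elimination. Instead it disposes of $q\geq 17$ by citing the Lang--Weil argument of Leep--Yeomans together with Heath-Brown's character-sum result for primes $\geq 17$, and it handles the finitely many remaining residue fields $q=11,13,16,25,27,32$ (and the zero-count statements for $q<17$) by a direct computer search over all forms of type $t_1,t_2$ after rescaling. So the thresholds $17$ and $11$ in the lemma are not outputs of a resultant calculation at all; they are imported from Heath-Brown's theorem and from exhaustive machine verification, and your proposal does not supply a substitute for either.
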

The last inequality is sharp. For instance, 
\begin{align} 
2x_1^3x_2^2+2x_1^3x_3^2+4x_2^3x_3^2+x_1x_2x_3(5x_1^2+6x_2^2+2x_3^2+x_1x_2+x_1x_3+x_2x_3) \notag 
\end{align} 
is a form over $\mathbb{F}_7$ 
 possessing exactly four  singular zeros, namely \begin{align}\langle (1,0,0) \rangle, \langle (0,1,0) \rangle, \langle (0,0,1) \rangle,  \langle (1,6,2) \rangle.\notag\end{align}
\begin{proof}
Suppose that $f$ does not have a non-singular zero. Thus we can write $f(x_1\mathbf{z}_1+x_2\mathbf{z}_2+x_3\mathbf{z}_3)$ as
\begin{align}
x_1x_2x_3Q(x_1,x_2,x_3)+\sum_{\substack{1\leq i< j \leq 3}}c_{  ij }x_i^3x_j^2+c_{  ji }x_j^3x_i^2 \notag 
\notag 
\end{align}
where $Q(x_1,x_2,x_3)$ is a quadratic form.   
By applying Lemma \ref{Lemma5} to any two variables of $f(x_1\mathbf{z}_1+x_2\mathbf{z}_2+x_3\mathbf{z}_3)$ we have  
$c_{ij}c_{ji}=0$ for all $1\leq i<j\leq 3$. Since $f$ does not vanish on any of the projective lines $\langle \mathbf{z}_i,\mathbf{z}_j \rangle$ with $1\leq i<j\leq 3$, we have either \begin{align}\text{$c_{ij}\neq 0\quad$ or $\quad c_{ji}\neq 0\quad$ for all $1\leq i<j\leq 3$.}\notag\end{align}
Hence, we see after permuting the variables that $f(x_1\mathbf{z}_1+x_2\mathbf{z}_2+x_3\mathbf{z}_3)$ takes one of the following shapes 
\begin{align}
&t_1(x_1,x_2,x_3)=c_{12}x_1^3x_2^2+c_{13}x_1^3x_3^2+c_{23}x_2^3x_3^2+x_1x_2x_3Q(x_1,x_2,x_3), \notag\\
&t_2(x_1,x_2,x_3)=c_{12}x_1^3x_2^2+c_{31}x_3^3x_1^2+c_{23}x_2^3x_3^2+x_1x_2x_3Q(x_1,x_2,x_3),\notag
\end{align}
where $Q(x_1,x_2,x_3)$ is a quadratic form and $c_{12},c_{13},c_{23}$ and $c_{31}$ are all non-zero coefficients. \\
It has been proved by Leep and Yeomans \cite{MR1382749} using the Lang-Weil Bound
 that $f(x_1\mathbf{z}_1+x_2\mathbf{z}_2+x_3\mathbf{z}_3)$ has always a non-singular zero, provided $q\geq  43$. Heath-Brown \cite{MR2595750} has extended this to prime values of $q\geq 17$.\\
Similarly, we show  by computer calculations that $f$ has a non-singular zero for $q=25,27,32$. In each case there are,  after an appropriate rescaling of both, the forms $t_1$, $t_2$ and the variables, just $6$ degrees of freedom. A computer program  can verify the existence of a non-singular zero for each form $t_1$, respectively each form $t_2$, by successively testing points in $\mathbb{F}_q^3$.\\
 If $q< 17$ it can be checked by an analogous computer calculation that $t_1$ and $t_2$ either  possess a non-singular zero or that the bound on $|\langle \mathbf{z}_1,\mathbf{z}_2,\mathbf{z}_3 \rangle\cap Z(f)|$ holds.
\end{proof}
Lemma \ref{Lemma8} establishes Theorem \ref{maintheorem}, provided $q\geq 17$. Moreover, it shows that not both quadruples $\mathbf{z}_1,\mathbf{z}_2,\mathbf{z}_3,\mathbf{z}_4$ and $\mathbf{z}_1,\mathbf{z}_2,\mathbf{z}_3,\mathbf{z}_5$ can consist of linearly dependent vectors.  Thus we  may assume, after renaming, 
that we have linearly independent vectors $\mathbf{z}_1,\mathbf{z}_2,\mathbf{z}_3,\mathbf{z}_4$ such that \begin{align}\langle \mathbf{z}_i,\mathbf{z}_j  \rangle \nsubseteq Z(f)\quad \text{for all $1\leq i<j\leq 4$}.\notag\end{align} We write $f(x_1\mathbf{z}_1+x_2\mathbf{z}_2+x_3\mathbf{z}_3+x_4\mathbf{z}_4)$ as
\begin{align}\label{equation10}
\sum_{\substack{i\neq j}}a_{ij}x_i^3x_j^2 + \sum_{\substack{ k\neq i,j\\ i<j}}b_{ijk}x_ix_jx_k^3+ \sum_{\substack{i \neq j,k\\  j<k}}c_{ijk}x_ix_j^2x_k^2 + \sum_{\substack{l \neq i,j,k\\i<j<k}}d_{ijkl}x_ix_jx_kx_l^2,
\end{align}
where $1 \leq i,j,k \leq 4$.
By applying Lemma \ref{Lemma5} 
 and since $f$ does not vanish on any of the projective lines $\langle \mathbf{z}_i,\mathbf{z}_j \rangle$, we conclude  that for each pair $(i,j)$ with $i\neq j$ exactly one of $a_{ij}$ and $a_{ji}$ is zero. It then follows that, after a permutation of the variables,  the form
(\ref{equation10}) can take only four different shapes.  If we write $h$ for
\begin{align}
a_{23}x_2^3x_3^2&+a_{24}x_2^3x_4^2+a_{34}x_3^3x_4^2+\notag \\  & \sum_{\substack{ k\neq i,j\\ i<j}}b_{ijk}x_ix_jx_k^3+  \sum_{\substack{i \neq j,k\\  j<k}}c_{ijk}x_ix_j^2x_k^2 				+ \sum_{\substack{l \neq i,j,k\\  i<j<k}}d_{ijkl}x_ix_jx_kx_l^2 \notag
\end{align}
those are
\begin{align} 
&g_1:=a_{12}x_1^3x_2^2+a_{13}x_1^3x_3^2+a_{14}x_1^3x_4^2+h, \notag \\
&g_2:=a_{12}x_1^3x_2^2+a_{31}x_3^3x_1^2+a_{14}x_1^3x_4^2+h,   \notag \\
&g_3:=a_{12}x_1^3x_2^2+a_{13}x_1^3x_3^2+a_{41}x_4^3x_1^2+h,  \notag\\
&g_4:=a_{21}x_2^3x_1^2+a_{13}x_1^3x_3^2+a_{41}x_4^3x_1^2+h.  \notag 
\end{align}
As indicated it has been checked on a computer that each of those forms has a non-singular zero, provided $9< q\leq 16$. We briefly describe the assembling process.\\
 Along the way, we have already excluded,  via Lemma \ref{Lemma5}, all forms that have a non-singular zero on one of the projective lines $\langle \mathbf{z}_i,\mathbf{z}_j  \rangle$ for  some $1\leq i<j\leq 4$. Furthermore, we know from the proof of Lemma \ref{Lemma8}  all forms which do not have a non-singular zero in one of the subspaces  \begin{align}\langle \mathbf{z}_i,\mathbf{z}_j,\mathbf{z}_k  \rangle \quad \text{for some $1\leq i<j<k\leq 4$.}\notag \end{align}
Note that $g_1,g_2,g_3$ and $g_4$ restricted to such a subspace are, after permuting the variables, equal to  $t_1$ or $t_2$ as stated in the proof of Lemma \ref{Lemma8}. The computer programs for $g_1,g_2,g_3$ and $g_4$ are analogous. Suppose $g_s$ for some $1\leq s \leq 4$ is one of these cases. 
 We save  the  rearranged  coefficients of those forms of shape $t_1$, respectively $t_2$,  without a non-singular zero  in four multidimensional arrays 
\begin{align}A_{ijk}[\star,\star] \quad \text{where $1\leq i<j<k\leq 4$}\notag\end{align} such that they represent the coefficients of $g_s$ restricted to the subspace $\langle \mathbf{z}_i,\mathbf{z}_j,\mathbf{z}_k  \rangle$. 
Thus, every set of coefficients of the form $g_s|_{\langle \mathbf{z}_i,\mathbf{z}_j,\mathbf{z}_k  \rangle}$ without a non-singular zero corresponds to a line $A_{ijk}[r,\star]$.\\
 We use these data to  construct all remaining forms by combining data in these arrays and four additional degrees of freedom. Let $r_{ijk}$ denote the $r_{ijk}$-th line of $A_{ijk}[\star,\star]$ for $1\leq i<j<k \leq 4$. The  non-negative integers $r_{123},r_{124},r_{134},r_{234}$, provided the corresponding lines are compatible with respect to the coefficients they share, determine a form \begin{align}C(r_{123},r_{124},r_{134},r_{234})\notag\end{align} in four variables, $x_1,x_2,x_3,x_4$ say, with each monomial in at most three variables. Thus  any relevant form of  shape $g_s$ can be written as 
\begin{align}C(r_{123}&,r_{124},r_{134},r_{234};a,b,c,d)\notag \\&=C(r_{123},r_{124},r_{134},r_{234})+x_1x_2x_3x_4(ax_1+bx_2+cx_3+dx_4).\notag \end{align} 
For all admissible $r_{123},r_{124},r_{134},r_{234}$ and for all $a,b,c,d\in \mathbb{F}_q$ we then search for a non-singular zero $(x_1,x_2,x_3,x_4) \in \mathbb{F}_q^4$ of \begin{align}C(r_{123},r_{124},r_{134},r_{234};a,b,c,d)\notag \end{align} by  trying points successively. 
To do this efficiently, one can rescale both the forms and variables. For instance, rescale $g_1,g_2,g_3$ such that 
\begin{align}a_{12}=1,\quad a_{23}=1,\quad a_{34}=1\notag\end{align} and $g_4$ such that 
\begin{align}a_{21}=1,\quad a_{23}=1,\quad a_{34}=1.\notag\end{align} It is easier to choose a rescaling that is compatible with the one used in Lemma \ref{Lemma8} (and hence with the data in the arrays $A_{ijk}[\star,\star]$). Besides these considerations, we put a general effort on implementing the algorithm efficiently.\\
The full C++ program and the data used in the assembling process are available at \cite{2013arXiv1308.0999D}.
 This completes the proof of Theorem \ref{maintheorem}.\\

  Note that apart from the computer checks we have not used any assumption other than $q>5$.  For $q=8,9$ it is likely that one can also find by a computer search a non-singular zero of every form of the shapes $g_1,g_2,g_3$ and $g_4$. Whereas the case $q=7$ seems more doubtful than $q=8,9$, one can easily find counterexamples, for instance of shape $g_1$, for $q=5$ using the same algorithm.


\bibliographystyle{amsplain}
\bibliography{Bib}
 
\noindent\textsc{\small Mathematisches Institut, Bunsenstr. 3-5, 37073 Göttingen,}\\ \textsc{Germany}\\
\emph{ \small E-mail address:} \texttt{jdumke@uni-math.gwdg.de}

\end{document}